\newtheorem {theorem}{Theorem}
\newtheorem {definition}{Definition}
\newtheorem {proposition}{Proposition}
\newtheorem {lemma}{Lemma}
\theoremstyle{remark}
\newtheorem*{remark}{Remark}
\DeclareMathOperator{\SSS}{\mathbb{S}^2}
\DeclareMathOperator{\RP}{\mathbb{RP}^2}
\begin{document}
\title{Upper bounds for the first eigenvalue of the Laplacian on non-orientable surfaces}
\author{Mikhail A. Karpukhin}
\date{} 
\maketitle
\begin{abstract}
In 1980 Yang and Yau~\cite{YY} proved the celebrated upper bound for the first eigenvalue on an orientable surface of genus $\gamma$. Later Li and Yau~\cite{LY} gave a simple proof of this bound by introducing the concept of conformal volume of a Riemannian manifold. In the same paper they proposed an approach for obtaining a similar estimate for non-orientable surfaces. In the present paper we formalize their argument and improve the bounds stated in~\cite{LY}.\\
\textit{2010 Mathematics Subject Classification.} 58E11, 58J50, 35P15.\\
\textit{Key words and phrases.} Non-orientable surfaces, eigenvalues of the Laplacian. 
\end{abstract}

\section{Introduction and main results}

Let $M$ be a closed surface and $g$ be a Riemannian metric on $M$. Let us consider the associated Laplace-Beltrami operator
$\Delta$ acting on the space of smooth functions on~$M$,
$$
\Delta f = -\frac{1}{\sqrt{|g|}}\frac{\partial}{\partial x^i}\left(\sqrt{|g|}g^{ij}\frac{\partial f}{\partial x^j}\right).
$$
It is well-known that the spectrum of $\Delta$ is non-negative and consists only of eigenvalues, each eigenvalue has a finite
 multiplicity and the eigenfunctions are smooth. Let us denote the eigenvalues of $\Delta$ by 
$$
0 = \lambda_0(M,g) < \lambda_1(M,g) \leqslant \lambda_2(M,g) \leqslant \lambda_3(M,g) \leqslant \ldots,
$$
where eigenvalues are written with multiplicities.

Let us fix $M$ and consider eigenvalues as functionals on the space of all Riemannian metrics. These functionals possess the following rescaling property,
$$
\lambda_i(M,tg) = \frac{\lambda_i(M,g)}{t}.
$$
Thus, in order to get scale-invariant functionals on the space of Riemannian metrics one has to normalize eigenvalues. The most natural way to do it is to multiply by the area, so we consider the following functionals
$$
\Lambda_i(M,g) = Area_g(M)\lambda_i(M,g).
$$
In the present paper we are concerned only with the $\Lambda_1(M,g)$. 
\begin{remark}
In further exposition we use the notation $\Lambda_1(\Omega,g)$, where $\Omega$ is a manifold with boundary. In this case, the Neumann boundary conditions on $\partial\Omega$ are implied and everything stated above remains true. In general a Riemannian surface is assumed to be a closed Riemannian surface unless stated otherwise. 
\end{remark}

In 1980, Yang and Yau~\cite{YY} proved that for an orientable surface of genus $\gamma$ the following inequality holds:
\begin{equation}
\Lambda_1(M,g) \leqslant 8\pi\left[\frac{\gamma+3}{2}\right].
\label{YY}
\end{equation}
  
Later Li and Yau simplified the proof of that inequality in the paper~\cite{LY} using the concept of conformal volume. They have also outlined the proof of a similar inequality for non-orientable surfaces. Their inequality reads 
\begin{equation}
\Lambda_1(M,g) \leqslant 24\pi\left[\frac{\gamma+3}{2}\right].
\label{LY}
\end{equation}
Here the genus $\gamma$ of a non-orientable surface is the genus of its orientable double cover, so that $\chi(M) = 1-\gamma$.
However, their argument contained a miscalculation, yielding the existence of a map of even degree from a non-orientable surface $M$ to a projective space, which contradicted Hurwitz identities (see e.g.~\cite{EKS}) in case the genus of $M$ was even. In the present paper we modify their ideas and obtain an improved upper bound.
\begin{theorem}
\label{MainTheorem}
Let $(M,h)$ be a non-orientable Riemannian surface of genus $\gamma$. Then one has the inequality
$$
\Lambda_1(M,h)\leqslant 16\pi \left[\frac{\gamma+3}{2}\right].
$$ 
\end{theorem}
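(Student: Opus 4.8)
The plan is to adapt the conformal-volume method of Li and Yau, doing the degree bookkeeping correctly this time.

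First I would pass to the orientable double cover. Let $\pi\colon\tilde M\to M$ be the orientable double cover, with deck transformation $\tau$, a fixed-point-free orientation-reversing involution; then $\tilde M$ has genus $\gamma$, $Area(\tilde M,\pi^*h)=2\,Area(M,h)$, and $\pi^*h$ makes $\tilde M$ a compact Riemann surface on which $\tau$ acts anti-holomorphically (a real curve of genus $\gamma$ with no real points). Pull-back identifies the spectrum of $(M,h)$ with the $\tau$-invariant part of the spectrum of $(\tilde M,\pi^*h)$; in particular $\lambda_1(M,h)$ is the smallest eigenvalue of $\Delta_{\tilde M}$ on the orthogonal complement of the constants inside the space of $\tau$-invariant functions. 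So it suffices to produce enough $\tau$-invariant, mean-zero test functions with small Rayleigh quotients.

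The test functions come from a meromorphic map, packaged via conformal volume. Recall the Li--Yau/Hersch mechanism: if $\Phi\colon(S,g)\to\mathbb{S}^n$ is conformal and, after a M\"obius transformation of $\mathbb{S}^n$, its image is balanced (centre of mass at the origin), then the $\Phi_i$ are mean-zero, $\sum_i\int|\nabla\Phi_i|^2=2\,Vol(\Phi)$ and $\sum_i\int\Phi_i^2=Area(S,g)$, whence $\Lambda_1(S,g)\le 2V_c(\Phi)$; for a degree-$k$ rational map to $\mathbb{S}^2$ one has $V_c=4\pi k$, while for the Veronese $\mathbb{RP}^2\subset\mathbb{S}^4$ one has $V_c=6\pi$, growing linearly in the degree of a branched cover. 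Since $\tilde M$ has genus $\gamma$, the classical gonality bound (a consequence of Riemann--Roch) gives a meromorphic $f\colon\tilde M\to\mathbb{CP}^1$ of degree $d=\lfloor(\gamma+3)/2\rfloor$. In the fortunate case that the corresponding pencil is $\tau$-invariant of antipodal type, $f$ descends to a degree-$d$ branched cover $M\to\mathbb{RP}^2$ and one gets the stronger bound $\Lambda_1(M,h)\le12\pi d$; this is in effect what \cite{LY} assumed. The point is that it can fail: a branched cover of $\mathbb{RP}^2$ with only simple branch points has an even number of them, whereas the Riemann--Hurwitz count $B=d-1+\gamma$ forces the wrong parity when $d\not\equiv\gamma\pmod 2$ (the Hurwitz-type obstruction, cf.~\cite{EKS}). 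To get around this I would use $f$ together with its $\tau$-conjugate $f^\tau:=\overline{f\circ\tau}$, which is again a holomorphic map of degree $d$; the unordered pair $\{f,f^\tau\}$ is $\tau$-invariant, and assembling it (via Veronese-type embeddings, possibly after a mild adjustment of the degree, or into a join of two $\mathbb{RP}^2$-covers) produces a genuinely $\tau$-equivariant conformal branched map whose conformal volume is controlled by $4\pi d+4\pi d=8\pi d$ — one ``copy'' coming from $f$ and one from $f\circ\tau$ — rather than by the $12\pi d$ of the single degree-$2d$ cover behind the $24\pi d$ of \cite{LY}. Plugging this into $\Lambda_1\le 2V_c$ gives $\Lambda_1(M,h)\le 2\cdot 8\pi d=16\pi\lfloor(\gamma+3)/2\rfloor$.

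The hard part is exactly the middle step: controlling the degree of the auxiliary equivariant map. One must track how the Hurwitz realizability obstruction for covers of $\mathbb{RP}^2$ interacts with the parity of $d=\lfloor(\gamma+3)/2\rfloor$ and with the real structure $\tau$ (equivalently, one needs the appropriate real Brill--Noether input for $(\tilde M,\tau)$), and a genuine difficulty is that $f$ is holomorphic while $f\circ\tau$ is anti-holomorphic, so the naive symmetric combinations of the two are not conformal and the equivariant map has to be built more carefully. The remaining ingredients — Hersch balancing on the relevant sphere, summation of the Rayleigh quotients, and the elementary conformal-volume computations for branched covers of $\mathbb{S}^2$ and $\mathbb{RP}^2$ — are routine once the degree is under control, and the whole point of the argument is that the unavoidable parity obstruction costs only the passage from the expected coefficient $12\pi$ to $16\pi$, not the $24\pi$ of \cite{LY}.
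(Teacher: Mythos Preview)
Your overall plan---pass to the orientable double cover, identify $\lambda_1(M,h)$ with the first $\tau$-even eigenvalue upstairs, take a degree-$d=\left[\frac{\gamma+3}{2}\right]$ meromorphic $f$, and combine it with $f^\tau:=\overline{f\circ\tau}$ to obtain something equivariant---is exactly the paper's strategy, and you have correctly located the one real obstacle. But the construction you need is left undone: you yourself flag that naive symmetric combinations of $f$ and $f\circ\tau$ are not conformal, and your ``Veronese/join'' suggestion with conformal volume $4\pi d+4\pi d$ is not a concrete map (nor is it clear how one would balance it $\tau$-equivariantly). As written this is a gap, not a proof.

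The paper fills the gap with two simple moves you are missing. First, the correct way to combine $f$ and $f^\tau$ is just to \emph{multiply}: $F:=f\cdot f^\tau=f\cdot\overline{f\circ\sigma}$ is meromorphic of degree at most $2d$ and satisfies $F\circ\sigma=\overline F$, i.e.\ it is equivariant for the involution $\tau_1(z)=\bar z$ on $\mathbb{S}^2$. (If $F$ happens to be constant, then $f$ itself is already equivariant for $\tau_1$ or for the antipodal $\tau_2$, and one gets an even better bound.) Second---and this is the conceptual point---when the target involution is $\tau_1$ rather than the antipodal map, the quotient $\mathbb{S}^2/\tau_1$ is a \emph{disk}, not $\mathbb{RP}^2$, and the $\tau_1$-even first eigenvalue of $(\mathbb{S}^2,g^*)$ is the first \emph{Neumann} eigenvalue of that disk. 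Since the disk is a domain in $\mathbb{S}^2$, its conformal volume is at most $4\pi$, whence $\Lambda_1(D,g^*)\le 8\pi$; the Yang--Yau push-forward then gives $\Lambda_1(M,h)\le 8\pi\cdot\deg F\le 16\pi d$. So the saving over the $24\pi d$ of Li--Yau comes not from an embedding into a larger sphere but from landing in a disk (conformal volume $\le 4\pi$) instead of $\mathbb{RP}^2$ (conformal volume $6\pi$); you were still aiming at $\mathbb{RP}^2$, which is why your sketch does not close.
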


Moreover we are able to say more if the genus $\gamma$ equals $2$.

\begin{theorem}
\label{gamma2}
Let $(M,h)$ be a non-orientable surface of genus $2$. Then one has the inequality
$$
\Lambda_1(M,h)\leqslant 16\pi
$$
\end{theorem}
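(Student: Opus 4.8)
The plan is to pass to the orientable double cover, on which the surface becomes a genus-two Riemann surface, and then to exploit hyperellipticity. Let $\pi\colon(\widetilde M,\widetilde h)\to(M,h)$ be the Riemannian orientation double cover and $\tau$ its deck involution, which is free and orientation-reversing; then $\widetilde M$ is a closed genus-two Riemann surface, $\mathrm{Area}_{\widetilde h}(\widetilde M)=2\,\mathrm{Area}_h(M)$, and pulling back eigenfunctions identifies $\lambda_1(M,h)$ with the first nonzero eigenvalue $\even(\widetilde M,\widetilde h)$ of $\Delta_{\widetilde h}$ on the subspace of $\tau$-invariant functions. Hence it suffices to prove
$$\even(\widetilde M,\widetilde h)\,\mathrm{Area}_{\widetilde h}(\widetilde M)\leqslant 32\pi .$$

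Since $\widetilde M$ has genus two it is hyperelliptic: the canonical map $\varphi\colon\widetilde M\to\SSS$ is a conformal branched double covering, unique up to post-composition by an element of $\mathrm{Aut}(\SSS)$. By this uniqueness the hyperelliptic involution $\iota$ commutes with $\tau$, so $\tau$ descends to an anti-conformal involution $\bar\tau$ of $\SSS$. The $2g+2=6$ branch points are $\iota$-fixed while $\tau$ is free, so they avoid $\mathrm{Fix}(\bar\tau)$; moreover a parity obstruction (six branch points, $6\equiv 2\pmod 4$) shows that the antipodal involution of $\SSS$ does not lift to an involution of $\widetilde M$, ruling out the case $\mathrm{Fix}(\bar\tau)=\varnothing$. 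Thus $\bar\tau$ is conformally conjugate to complex conjugation, and I may realise $\SSS\subset\mathbb R^3$ so that $\bar\tau$ is the reflection in the plane $\{x_2=0\}$. For $\rho$ in the group $PGL_2(\mathbb R)$ of conformal transformations of $\SSS$ commuting with $\bar\tau$, put $\varphi_\rho=\rho\circ\varphi$; this is again conformal and $\tau$-equivariant, of degree two, so $\int_{\widetilde M}|d\varphi_\rho|^2=2\,\mathrm{Area}(\varphi_\rho)=2\cdot 2\cdot 4\pi=16\pi$. Writing $\varphi_\rho=\bigl((\varphi_\rho)_1,(\varphi_\rho)_2,(\varphi_\rho)_3\bigr)$, the components $(\varphi_\rho)_1,(\varphi_\rho)_3$ are $\tau$-invariant and $(\varphi_\rho)_2$ is $\tau$-anti-invariant. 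Using $(\varphi_\rho)_1,(\varphi_\rho)_3$ as test functions for $\even(\widetilde M,\widetilde h)$ once they have been balanced, together with $(\varphi_\rho)_1^2+(\varphi_\rho)_3^2=1-(\varphi_\rho)_2^2$, one obtains
$$\even(\widetilde M,\widetilde h)\leqslant\frac{\int_{\widetilde M}\bigl(|\nabla(\varphi_\rho)_1|^2+|\nabla(\varphi_\rho)_3|^2\bigr)}{\int_{\widetilde M}\bigl((\varphi_\rho)_1^2+(\varphi_\rho)_3^2\bigr)}\leqslant\frac{16\pi}{\mathrm{Area}_{\widetilde h}(\widetilde M)-\int_{\widetilde M}(\varphi_\rho)_2^2}.$$
So the proof will be complete once $\rho$ is chosen so that, simultaneously, (i) $(\varphi_\rho)_1$ and $(\varphi_\rho)_3$ have zero mean, and (ii) $\int_{\widetilde M}(\varphi_\rho)_2^2\leqslant\tfrac{1}{2}\,\mathrm{Area}_{\widetilde h}(\widetilde M)$.

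The delicate point is the simultaneous realisation of (i) and (ii). For (i) I would run a Hersch-type degree argument for the centre-of-mass map $\rho\mapsto\int_{\widetilde M}\varphi_\rho\,dA_{\widetilde h}$ on a compactification of $PGL_2(\mathbb R)$, noting that this map automatically lands in the plane $\{x_2=0\}$ because $\varphi_*(dA_{\widetilde h})$ is $\bar\tau$-invariant, and that every non-compact direction in $PGL_2(\mathbb R)$ concentrates $\varphi_*(dA_{\widetilde h})$ onto $\mathrm{Fix}(\bar\tau)\subset\{x_2=0\}$. The obstacle is that balancing by itself does not forbid the pushforward measure from splitting its mass equally between the two points $(0,\pm1,0)$, which are exactly where $(\varphi_\rho)_2^2$ is maximal and the denominator above collapses; so one must genuinely use the one-parameter freedom that remains after balancing and argue (variationally, from the shape of the balancing locus in $PGL_2(\mathbb R)$, which by the above is compact) that the minimum of $\int_{\widetilde M}(\varphi_\rho)_2^2$ over balanced $\rho$ is at most $\tfrac{1}{2}\,\mathrm{Area}_{\widetilde h}(\widetilde M)$. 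If this is awkward, an alternative is to enlarge the test family by the three $\tau$-invariant degree-two spherical harmonics $x_1x_3$, $x_1^2-x_3^2$, $x_2^2-\tfrac{1}{3}$, whose common zero locus misses those two points, and then to track the resulting constant. I would also want to recheck the parity argument excluding $\mathrm{Fix}(\bar\tau)=\varnothing$, since it is precisely this step that pins the bound at $16\pi$ rather than the weaker value $32\pi$ coming from Theorem~\ref{MainTheorem}.
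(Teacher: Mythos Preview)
Your overall architecture matches the paper's: pass to the double cover, use that genus two is hyperelliptic, show the hyperelliptic involution commutes with the deck involution, and rule out the antipodal case on $\SSS$ by a parity/Hurwitz-type obstruction so that $\bar\tau$ is conjugate to complex conjugation. The paper states the obstruction as ``a degree-$2$ branched cover $M\to\RP$ with $\gamma$ even contradicts the Hurwitz identities of \cite{EKS}''; your $6\equiv 2\pmod 4$ heuristic needs to be made precise along these lines, as you yourself note.

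The genuine gap is exactly where you locate it. Restricting the balancing to $PGL_2(\mathbb{R})$ and using only the two $\tau$-even coordinates forces you to bound $\int_{\widetilde M}(\varphi_\rho)_2^{\,2}$ from above by $\tfrac12\,\mathrm{Area}$, and neither the degree argument nor the proposed enlargement by second harmonics yields this without substantial further work (and the second-harmonic route would in any case worsen the constant). The paper avoids this difficulty completely by \emph{passing to the quotient before running Hersch}. Since $\bar\tau$ is reflection, $\lambda_1^{\bar\tau}(\SSS,g^*)$ equals the first \emph{Neumann} eigenvalue of the hemisphere $D$ with the pushed-forward metric $g^*$. On $(D,g^*)$ one is entitled to use \emph{all three} ambient coordinates $x_1,x_2,x_3$ as test functions and to balance with the \emph{full} conformal group of $\SSS$ (no $\bar\tau$-equivariance is needed once you are on $D$); this is exactly the content of items~2)--4) of Theorem~\ref{ThConf}, giving $\Lambda_1(D,g^*)\leqslant 2V_c(D,g^*)\leqslant 2V_c(\SSS,g^*)=8\pi$. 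Doubling the area to return to $\SSS$ and applying the push-forward inequality~(\ref{bound}) with $d=2$ gives $\Lambda_1(M,h)\leqslant 16\pi$. In short: your self-imposed equivariance constraint on the M\"obius transformations is what creates the obstacle; drop it by first descending to the hemisphere, and the standard Li--Yau/Hersch argument goes through with the sharp constant.
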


An interesting question of spectral geometry is determining the value $\sup_g \Lambda_1(M,g)$ for a fixed surface $M$.
Thus inequality~(\ref{YY}) guarantees that this value is finite for orientable surfaces while Theorem~\ref{MainTheorem} asserts the same for non-orientable surfaces. It is known that for a sphere supremum is equal to $8\pi$ and is achieved on a round metric, see~\cite{Hersch}. Recently, the value of this quantity has been found for a projective plane (see~\cite{LY}), a torus (see~\cite{Nadirashvili}) and a Klein bottle (see~\cite{EGJ,JNP}). Up to a computer calculation the result is known for an orientable surface of genus $2$ (see~\cite{JLNNP}). Let us also mention a series of works on extremal metrics for functionals $\Lambda_i(M,g)$ on torus and Klein bottle~\cite{Karpukhin1,Karpukhin2,Lapointe,Penskoi1,Penskoi2,Penskoi3,PenskoiSurvey}.

The paper is organized as follows. In Section~\ref{ConfVol} we give a defnition of conformal volume and outline some of its properties. In Section~\ref{prelim} we explain the main idea of the proof of Theorem~\ref{MainTheorem}. Section~\ref{algem} contains some facts about the geometry of real curves that are necessary for the proof. Finally in Sections~\ref{final} and~\ref{hyperelliptic} we prove Theorems~\ref{MainTheorem} and~\ref{gamma2}.

\section{Conformal volume}

In the paper~\cite{LY} the authors introduced the concept of conformal volume $V_c(M,g)$ for any Riemannian surface $(M,g)$, possibly with boundary. It is defined by the following min-max formula
$$
V_c(M,g) = \lim_{n\to\infty}\inf_{\phi}\sup_{h\in G_n} \int_M dV_h,
$$
where $\phi$ ranges over all possible conformal mappings $M\to\mathbb{S}^n$, $G_n$ is a group of conformal diffeomorphisms of $\mathbb{S}^n$ and $dV_h$ is a volume element associated to the metric (possibly degenerate) $\phi^*h^*g_0$, where 
$g_0$ is the canonical metric on the sphere.
The next theorem is a compilation of results from~\cite{LY}  that allow us to immediately present a proof of inequality~(\ref{YY}) given in~\cite{LY}. 
\label{ConfVol}

\begin{theorem}[Li and Yau~\cite{LY}]
The conformal volume $V_c(M,g)$ possesses the following properties
\label{ThConf}
\begin{itemize}
\item[1)]If $N \to M$ is a conformal map of degree $d$ then $V_c(N,g)\leqslant d V_c(M,g)$.
\item[2)] One has the inequality
$$
\Lambda_1(M,g)\leqslant 2V_c(M,g)
$$ 
\item[3)] If $\Omega\subset M$ is a domain of $M$ then $V_c(\Omega,g)\leqslant V_c(M,g)$. As a result $\Lambda_1(\Omega,g)\leqslant 2V_c(M,g)$.
\item[4)] $V_c(\mathbb{S}^2,g) = 4\pi$, $V_c(\mathbb{RP}^2,h) = 6\pi$ for any choice of metrics $g$ and $h$.
\end{itemize}
\end{theorem}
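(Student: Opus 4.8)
The four items are largely independent, so the plan is to dispatch them in turn, after recording the two elementary identities that drive everything. First, for a conformal map $\psi\colon M\to\mathbb{S}^n$ from a surface, working in isothermal coordinates where $\psi^*g_0=e^{2u}g$, one has the pointwise identity $|d\psi|^2\,dV_g=2\,dV_{\psi^*g_0}$: the Dirichlet energy density equals twice the (possibly degenerate) area density of the pullback. Second, if $\pi\colon N\to M$ is a conformal branched map of degree $d$, then $dV_{\pi^*A}=\pi^*(dV_A)$ for any pulled-back metric $A$ (a local Jacobian computation), so that $\int_N\pi^*(dV_A)=d\int_M dV_A$ by the degree formula, the branch points forming a null set. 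I also note at the outset that $V_c$ depends only on the conformal class, since both the class of admissible maps $\phi$ and the quantity $\int_M dV_h$ are insensitive to the representative of $[g]$. With these in hand, Property 1 is immediate: given conformal $\phi\colon M\to\mathbb{S}^n$, the map $\phi\circ\pi\colon N\to\mathbb{S}^n$ is conformal, and for every $h\in G_n$ the second identity (with $A=(h\circ\phi)^*g_0$) gives $\mathrm{Area}(h\circ\phi\circ\pi)=d\,\mathrm{Area}(h\circ\phi)$; taking $\sup_h$, then $\inf_\phi$, then $n\to\infty$ yields $V_c(N,g)\leqslant d\,V_c(M,g)$. Property 3 is equally direct: for conformal $\phi\colon M\to\mathbb{S}^n$ the restriction $\phi|_\Omega$ is conformal and, the area density being nonnegative, $\mathrm{Area}(h\circ\phi|_\Omega)\leqslant\mathrm{Area}(h\circ\phi)$ for every $h$; since $\phi|_\Omega$ is one admissible competitor for $V_c(\Omega,g)$, taking $\sup_h$ and $\inf_\phi$ over maps of $M$ gives $V_c(\Omega,g)\leqslant V_c(M,g)$.

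Property 2 is the Hersch--Li--Yau test-function argument. Fix conformal $\phi\colon M\to\mathbb{S}^n\subset\mathbb{R}^{n+1}$. The first step is a balancing lemma: sending $a$ in the open ball $B^{n+1}$ to the normalized barycenter $\mathrm{Area}_g(M)^{-1}\int_M(T_a\circ\phi)\,dV_g$, where $T_a\in G_n$ is the conformal dilation toward $a$, defines a continuous self-map of $B^{n+1}$ that extends to the identity on $\partial B^{n+1}$ (as $|a|\to1$ the measure $(T_a\circ\phi)_*dV_g$ concentrates at $a/|a|$), so a Brouwer-degree argument produces $a_0$ with vanishing barycenter. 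Writing $\psi=T_{a_0}\circ\phi$, each component $\psi^i$ is then $L^2(dV_g)$-orthogonal to constants, hence admissible in the variational characterization of $\lambda_1$. Summing $\lambda_1\int_M(\psi^i)^2\leqslant\int_M|\nabla\psi^i|^2$ over $i$, using $\sum_i(\psi^i)^2\equiv1$ on the left and the energy/area identity on the right, gives $\lambda_1\,\mathrm{Area}_g(M)\leqslant 2\,\mathrm{Area}(\psi)\leqslant 2\sup_h\mathrm{Area}(h\circ\phi)$; taking $\inf_\phi$ and $n\to\infty$ yields $\Lambda_1(M,g)\leqslant 2V_c(M,g)$. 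The same argument applies verbatim to a domain under Neumann conditions, since constants lie in the form domain, which together with Property 3 gives $\Lambda_1(\Omega,g)\leqslant 2V_c(M,g)$.

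For Property 4, conformal invariance plus uniformization (every metric on $\mathbb{S}^2$, resp.\ $\mathbb{RP}^2$, is conformal to the round one) reduces the computation to the round metrics. The lower bounds come from Property 2 and the known values $\Lambda_1(\mathbb{S}^2,g_0)=8\pi$ and $\Lambda_1(\mathbb{RP}^2,h_0)=12\pi$, the latter from the degree-two even spherical harmonics ($\lambda_1=6$ on area $2\pi$), giving $V_c\geqslant4\pi$ and $V_c\geqslant6\pi$ respectively. For the sphere the matching upper bound is immediate from the identity map: every $h\in G_2$ is a degree-one diffeomorphism, so $\mathrm{Area}(h)=\int_{\mathbb{S}^2}h^*dV_{g_0}=4\pi$, whence $V_c(\mathbb{S}^2)\leqslant4\pi$. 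For $\mathbb{RP}^2$ I take the Veronese embedding $\phi\colon\mathbb{RP}^2\to\mathbb{S}^4$ by the $\lambda_1=6$ eigenfunctions; since $\phi^*g_0=3h_0$ its area is $6\pi$, so the upper bound follows once one shows $\sup_{h\in G_4}\mathrm{Area}(h\circ\phi)=6\pi$.

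I expect the two genuinely substantial points to be the balancing lemma in Property 2 and the final claim in Property 4. In the former, one must verify that the barycenter map really sends the open ball into itself and limits to the identity on the boundary, which is exactly where the structure of the conformal group of $\mathbb{S}^n$ is used. The latter is the crux: one must show that conformal dilations of $\mathbb{S}^4$ do not increase the area of the Veronese surface, i.e.\ that the minimal, barycenter-balanced first-eigenfunction immersion maximizes area within its conformal orbit. I would handle this by differentiating $a\mapsto\mathrm{Area}(T_a\circ\phi)$ and observing that the barycenter condition makes $a=0$ a critical point while minimality forces it to be a maximum. Everything else reduces to careful bookkeeping of the two elementary identities recorded above.
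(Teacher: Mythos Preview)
The paper does not prove this theorem. It is stated explicitly as ``a compilation of results from~[LY]'' and is used as a black box; there is no argument in the paper to compare your proposal against.

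That said, your outline tracks the standard Li--Yau arguments closely and is essentially correct for items 1)--3) and for the sphere half of item 4). The one place where your sketch is genuinely incomplete is the upper bound $V_c(\mathbb{RP}^2)\leqslant 6\pi$. Observing that the barycenter condition makes $a=0$ a critical point of $a\mapsto\mathrm{Area}(T_a\circ\phi)$ is fine, but ``minimality forces it to be a maximum'' does not follow: minimality of the Veronese in $\mathbb{S}^4$ tells you $a=0$ is critical for \emph{all} normal variations, not that it is a global maximum along the specific one-parameter families of conformal dilations. What Li--Yau actually do is analyze the boundary behaviour: as $|a|\to 1$ the pulled-back area of $T_a\circ\phi$ tends to $4\pi$ times the local density of $\phi$ at the concentration point, and for the Veronese this density is $1$, so the boundary limit is $4\pi<6\pi$. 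Combined with the interior critical-point statement (and a second-variation or direct monotonicity computation along each ray), this pins down $6\pi$ as the supremum. Your plan would need this boundary step, or an equivalent global argument, to close.
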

\begin{remark}
Let us remind the reader that if $\partial M$ is not empty then $\lambda_1(M,g)$ denotes the first non-trivial Neumann eigenvalue of Laplacian on $M$.
\end{remark}

Using this theorem it is easy to prove inequlaity~(\ref{YY}). Indeed it is a known fact (see, e.g.~\cite{GH}) that for any orientable Riemannian surface of genus $\gamma$ there is a conformal map to the sphere of degree at most $\left[\frac{\gamma+3}{2}\right]$. Then combination of items $2$ and $4$ from the theorem completes the proof.

It is natural to try to do the same for a non-orientable surface, where one replaces the sphere $\SSS$ with the projective space $\mathbb{RP}^2$. This is an approach proposed in~\cite{LY}. However as we discussed in the introduction the argument presented there contains an error. Moreover, we were not able to find in the literature an answer to the question whether there exists a conformal map $M\to \mathbb{RP}^2$ for a given non-orientable surface $M$.

It turns out that it is more convenient to use a disk instead of $\RP$ in the aforementioned argument. The reasoning for that comes from a theory of real curves and will be explained 
below in Section~\ref{algem}.   

\section{Preliminaries}

Let $\pi\colon\tilde M\to M$ be a double cover of $M$, where $\tilde M$ is an orientable surface of genus $\gamma$. We endow $\tilde M$ with the metric $g = \pi^*h$ and the corresponding conformal structure. Then $\pi$ can be seen as a factorization of $\tilde M$ by a free isometric involution $\sigma\colon\tilde M\to\tilde M$ that acts by exchanging the sheets of the cover $\pi$.    
\label{prelim}

Since $\sigma$ is an isometry, one has $\sigma^*\Delta = \Delta\sigma^*$ and therefore there is a common basis of eigenfunction for these operators. In particular, eigenvalues of the Laplacian are divided into odd and even, i.~e. those corresponding to odd and even eigenfunctions respectively. Let us denote by $\lambda_1^{\sigma}(\tilde M,g)$ the first even non-trivial eigenvalue of $\Delta$ on $\tilde M$. Then one observes that $\lambda_1^{\sigma}(\tilde M,g) = \lambda_1(M,h)$.

Assume $f\colon\tilde M\to\SSS$ is a conformally branched covering of degree $d$. Yang and Yau in their paper introduced a procedure of pushing forward a metric $g$ on $\tilde M$ to a metric $g^*$ on $\SSS$ with isolated conical singularities at branch points of $f$. 
\begin{definition}
\label{push}
Let $f$ be a conformally branched covering $f\colon\tilde M\to\SSS$.
Then preimage of a suitable neighbourhood $U$ of a non-branching point $p\in\SSS$ is a collection of $d$ disks $U_i$, $i =1,\ldots,d$ and the restriction $f_i:=f|_{U_i}$ to each disk is a diffeomorphism. The push-forward metric $g^*$ on $U$ is defined as a sum $\Sigma (f_i^{-1})^*g$.
\end{definition}
 In their paper Yang and Yau proved the following theorem for the metric $g^*$.
\begin{theorem}
For any function $u\in C^\infty(\SSS)$ one has the following
\begin{equation}
\label{YY1}
\int_{\SSS} u \, dv_{g^*} = \int_{\tilde M} (f^*u) dv_{g}
\end{equation}
and
\begin{equation}
\label{YY2}
d \int_{\SSS} |\nabla_{g^*} u|^2 dv_{g^*} = \int_{\tilde M} |\nabla_{g} (f^*u)|^2 dv_{g}.
\end{equation}
\end{theorem}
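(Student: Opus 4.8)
The plan is to reduce everything to two elementary facts about dimension two: the Dirichlet integral $\int|\nabla u|^2\,dv$ is a conformal invariant, and a conformally branched covering becomes, after a conformal change of the metric on the source, a local isometry away from its finitely many branch points. Write $B\subset\tilde M$ for the finite set of branch points and $f(B)\subset\SSS$ for its image. Since $f$ is conformal, on $\tilde M\setminus B$ one has $f^*g_0=e^{2\varphi}g$ for a smooth function $\varphi$. Hence on an evenly covered neighbourhood $U$ as in Definition~\ref{push} each sheet satisfies $(f_i^{-1})^*g=(f_i^{-1})^*\big(e^{-2\varphi}f^*g_0\big)=e^{-2(\varphi\circ f_i^{-1})}\,g_0$, using $f\circ f_i^{-1}=\mathrm{id}_U$, and therefore $g^*=\big(\sum_{i=1}^{d}e^{-2(\varphi\circ f_i^{-1})}\big)\,g_0$ on $U$. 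As this expression is insensitive to the labelling of the sheets, and two evenly covered neighbourhoods induce the same collection of inverse branches over their intersection, the local formulas glue to a single metric $g^*=e^{2\Phi}g_0$ on $\SSS\setminus f(B)$, conformal to $g_0$; in the local model $z\mapsto z^k$ at a branch point of order $k$ one checks that $g^*$ acquires a conical singularity of angle $2\pi/k$ at the corresponding point of $f(B)$, so in particular $f(B)$ has finite $g^*$-area and is null for $dv_{g^*}$, just as $B$ is null for $dv_g$.

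To prove~(\ref{YY1}), cover $\SSS\setminus f(B)$ by evenly covered neighbourhoods with a subordinate partition of unity. On each such $U$ all the metrics $(f_i^{-1})^*g$ are conformal to $g_0$, so their volume forms add: $dv_{g^*}=\sum_i dv_{(f_i^{-1})^*g}$ on $U$; applying the change-of-variables formula to the diffeomorphism $f_i\colon U_i\to U$ gives $\int_U u\,dv_{g^*}=\sum_i\int_{U_i}(f^*u)\,dv_g=\int_{f^{-1}(U)}(f^*u)\,dv_g$. Summing over the partition of unity and then discarding the null sets $f(B)$ and $B$ yields $\int_{\SSS}u\,dv_{g^*}=\int_{\tilde M}(f^*u)\,dv_g$.

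For~(\ref{YY2}) I would apply the conformal invariance of the two-dimensional Dirichlet integral twice, interleaved with the branched-covering identity $\int_{\tilde M}f^*\omega=d\int_{\SSS}\omega$ valid for densities $\omega$ on $\SSS$. On $\tilde M\setminus B$, conformal invariance for the pair $g$, $f^*g_0=e^{2\varphi}g$ gives $|\nabla_g(f^*u)|^2\,dv_g=|\nabla_{f^*g_0}(f^*u)|^2\,dv_{f^*g_0}$, and since $f$ restricts to a local isometry from $(\tilde M\setminus B,f^*g_0)$ onto $(\SSS\setminus f(B),g_0)$ this equals $f^*\big(|\nabla_{g_0}u|^2\,dv_{g_0}\big)$; integrating and using that $f$ is a $d$-sheeted branched cover gives $\int_{\tilde M}|\nabla_g(f^*u)|^2\,dv_g=d\int_{\SSS}|\nabla_{g_0}u|^2\,dv_{g_0}$. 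Applying conformal invariance once more, now for the pair $g_0$, $g^*=e^{2\Phi}g_0$ on $\SSS\setminus f(B)$, gives $\int_{\SSS}|\nabla_{g_0}u|^2\,dv_{g_0}=\int_{\SSS}|\nabla_{g^*}u|^2\,dv_{g^*}$ (the integrands agree pointwise on the full-measure set $\SSS\setminus f(B)$, and both integrals are finite). Combining the two displays yields $d\int_{\SSS}|\nabla_{g^*}u|^2\,dv_{g^*}=\int_{\tilde M}|\nabla_g(f^*u)|^2\,dv_g$.

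The only genuine obstacle is the analysis at the branch points: one must verify that the sheetwise pieces of $g^*$ assemble into a bona fide metric on $\SSS\setminus f(B)$ that extends across $f(B)$ with at worst conical singularities, so that those finitely many points are negligible for the area measure $dv_{g^*}$ and for the (conformally invariant, hence automatically finite) Dirichlet energy on both sides. Everything else is bookkeeping with the change-of-variables formula and with the conformal invariance of area and of the Dirichlet energy in dimension two.
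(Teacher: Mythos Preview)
The paper does not supply a proof of this theorem; it is quoted as a result of Yang and Yau~\cite{YY} and used as a black box. Your argument is correct and is essentially the standard one: the two facts you isolate---that in dimension two the area forms of metrics conformal to a fixed background add linearly, so that $dv_{g^*}=\sum_i dv_{(f_i^{-1})^*g}$ on any evenly covered $U$, and that the Dirichlet density $|\nabla u|^2\,dv$ is a pointwise conformal invariant---together with change of variables on each sheet give~(\ref{YY1}) and~(\ref{YY2}) away from the branch locus, and the local model shows the finitely many branch values are negligible for both the area and the energy.

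One small caveat: your description of the singularity of $g^*$ as ``conical of angle $2\pi/k$'' is accurate only when a single ramification point of order $k$ lies over the branch value. In general several preimages $q_1,\dots,q_m$ of orders $k_1,\dots,k_m$ may sit over the same $p\in f(B)$, and then $g^*$ near $p$ is a sum of terms behaving like $|w|^{2/k_j-2}|dw|^2$ (plus a smooth piece from the unramified sheets), which is not a pure cone. This does not affect the argument---each summand has locally finite area and the Dirichlet energy is conformally invariant regardless---so the conclusion in your final paragraph stands, but the geometric picture is slightly more complicated than a single cone angle.
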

Yang and Yau used this theorem to prove an upper bound for $\lambda_1(\tilde M,g)$. Here we adapt their approach to prove an upper bound for $\lambda_1^{\sigma}(\tilde M,g)$. 

Suppose that there is a conformal involution $\tau$ of a sphere such that $f$ satisfies the following relation
\begin{equation}
\label{tau}
 f(\sigma(x)) = \tau (f(x)).
 \end{equation}
 
\begin{lemma} 
The transformation $\tau$ is an isometry of $(\SSS,g^*)$.
\end{lemma}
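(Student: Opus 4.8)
The plan is to verify the identity $\tau^*g^*=g^*$ directly from the local recipe for $g^*$ in Definition~\ref{push}, using only two inputs: that $\sigma$ is a $g$-isometry (recall $g=\pi^*h$), and the intertwining relation~(\ref{tau}), which I will also use in the form $f\circ\sigma^{-1}=\tau^{-1}\circ f$.

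First I would record two elementary facts. Since $f\circ\sigma=\tau\circ f$ and $\sigma$ is a diffeomorphism, $\sigma$ permutes the finite set of critical points of $f$, hence $\tau$ permutes the branch values of $f$; in particular $\tau$ carries non-branching points to non-branching points. Second, for every open $U\subset\SSS$ one has $f^{-1}(\tau(U))=\sigma\big(f^{-1}(U)\big)$, which follows from~(\ref{tau}) by an immediate set-theoretic check.

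Now for the main step. Fix a non-branching point $p$ and a suitable neighbourhood $U\ni p$ as in Definition~\ref{push}, so that $f^{-1}(U)=\bigsqcup_{i=1}^d U_i$ with $f_i:=f|_{U_i}\colon U_i\to U$ a diffeomorphism. By the second fact, $f^{-1}(\tau(U))=\bigsqcup_{i=1}^d\sigma(U_i)$; each $\sigma(U_i)$ is a disk, and on it $f$ restricts to $\tau\circ f_i\circ\sigma^{-1}$, a diffeomorphism onto $\tau(U)$. Hence $\tau(U)$ is again a suitable neighbourhood (of the non-branching point $\tau(p)$), and $\{\sigma(U_i)\}_{i=1}^d$ is precisely its sheet decomposition. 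Since $(f|_{\sigma(U_i)})^{-1}=\sigma\circ f_i^{-1}\circ\tau^{-1}$, we compute
$$
g^*|_{\tau(U)}=\sum_{i=1}^d\big((f|_{\sigma(U_i)})^{-1}\big)^*g=\sum_{i=1}^d(\tau^{-1})^*(f_i^{-1})^*\sigma^*g=(\tau^{-1})^*\sum_{i=1}^d(f_i^{-1})^*g=(\tau^{-1})^*\big(g^*|_U\big),
$$
where we used $\sigma^*g=g$. This says precisely that $\tau^*g^*=g^*$ on $U$.

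Finally, since the non-branching points are dense and $g^*$ is a smooth metric away from its finitely many conical singularities (which $\tau$ permutes, by the first fact), the pointwise identity $\tau^*g^*=g^*$ extends over the whole smooth locus of $g^*$, so $\tau$ is an isometry of $(\SSS,g^*)$. I do not anticipate a genuine obstacle: the only points requiring care are that $\tau$ respects the branch locus and that $\{\sigma(U_i)\}$ really is the sheet decomposition over $\tau(U)$ with $f$ acting as $\tau\circ f_i\circ\sigma^{-1}$; once these are in place, the displayed computation is forced.
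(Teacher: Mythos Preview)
Your proof is correct and follows essentially the same route as the paper: both identify the sheets over $\tau(U)$ as $\{\sigma(U_i)\}$ via the intertwining relation, use $\sigma^*g=g$, and reduce to the displayed pullback computation showing $\tau^*g^*=g^*$ on a suitable neighbourhood. Your version is a bit more careful (explicitly checking that $\tau$ preserves the branch locus and spelling out the density/extension step), but the idea is identical.
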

\begin{proof}
Let $f$, $U$, $U_i$ and $f_i$ be as in definition~\ref{push}, i.e. $\pi^{-1}(U) = \coprod U_i$. Applying $\sigma$ to this equality and using relation~(\ref{tau}) we obtain 
$\pi^{-1}(\tau(U)) = \coprod \sigma(U_i)$. Let $\tilde f_i$ be restrictions of $f$ on $\sigma(U_i)$. Then 
$$
(\tilde f_i^{-1})^*g = (\tilde f_i^{-1})^*\sigma^*g = (\tilde f_i^{-1}\circ\sigma)^*g = (\tau\circ f_i^{-1})^*g = \tau^*(f_i^{-1})^*g.
$$
Summing these relations over $i$ we obtain $\tau^*g^* = g^*$.
\end{proof}

 Thus we can consider the first $\tau$-even eigenfunction $\phi$ corresponding to $\lambda_1^{\tau}(\SSS,g^*)$. The function $f^*\phi$ is obviously $\sigma$-even and by~(\ref{YY1}) the mean of $f^*\phi$ over $\tilde M$ is $0$. Therefore we can use $f^*\phi$ as a test function in the variational characterization of $\lambda_1^{\tau}(\tilde M,g)$. Application of~(\ref{YY2}) yields
$$
\lambda_1^{\tau}(\tilde M,g) \leqslant \frac{\int_{\tilde M} |\nabla_{g} (f^*\phi)|^2 dv_{g}}{\int_{\tilde M} f^*\phi dv_{g}} = d \frac{\int_{\SSS} |\nabla_{g^*} \phi|^2 dv_{g^*}}{\int_{\SSS} \phi dv_{g^*}} = d\lambda_1^{\tau}(\SSS,g^*).
$$     
As a result, we get an inequality $\lambda_1^\sigma(\tilde M,g)\leqslant d\lambda_1^{\tau}(\SSS,g^*)$. Moreover using equality~(\ref{YY1}) for $u \equiv 1$ we obtain $Area_{g^*}(\SSS) = Area_g(\tilde M) = 2 Area_h(M)$. Finally, we have 
\begin{equation}
\label{bound}
\Lambda_1(M,h)\leqslant \frac{1}{2}d\lambda_1^\tau(\SSS,g^*) Area_{g^*}(\SSS).
\end{equation}

To derive Theorem~\ref{MainTheorem} from inequality~(\ref{bound}) we need to find upper bounds for $d$ (it is done in the following section) and for $\lambda_1^\tau(\SSS,g^*) Area_{g^*}(\SSS)$ (see Section~\ref{final}).

\section{Some facts from real algebraic geometry}
In this section we explain some terminology from algebraic geometry.
\label{algem}

In algebraic geometry $(\tilde M, \sigma)$ is called a real curve. The term "curve" is used due to the fact that $M$ has a structure of 1-dimensional complex manifold. The reason behind "real" is that such a curve can always be embedded in the projective space $\mathbb{CP}^n$ such that $\sigma$ becomes conjugation of the coordinates $(X_0:\ldots:X_n)\mapsto(\bar{X_0}:\ldots:\bar{X_n})$. Thus the image of the embedding is a zero locus of polynomials with real coefficients. The fact that $\sigma$ has no fixed points corresponds to the statement "$\tilde M$ has no real points" meaning that aforementioned embedding does not intersect the real locus $\mathbb{RP}^n\subset\mathbb{CP}^n$.

Then relation~(\ref{tau}) becomes a definition of a map between real curves $(\tilde M,\sigma)\to (\SSS,\tau)$. Any conformal automorphism of $\SSS$ inversing the orientation is conjugate to either $\tau_1(z) = \bar z$ or $\tau_2(z) = -\bar z^{-1}$. Thus without loss of generality we can assume that $\tau=\tau_1$ or $\tau=\tau_2$.

Let $d_1(\tilde M)$ be a minimal degree of the map $(\tilde M,\sigma)\to (\SSS,\tau_1)$. Then $d_1(\tilde M)$ coincides with the so called real gonality of $(\tilde M,\sigma)$, see e.g.~\cite{CM}. Let us also denote by $d_2(\tilde M)$ a minimal degree of the map $(\tilde M,\sigma)\to (\SSS,\tau_2)$.

We deduce Theorem~\ref{MainTheorem} from the following proposition.
\begin{proposition}
\label{prop}
Let $(M,h)$ be a non-orientable Riemannian surface and $(\tilde M, \sigma)$ is the corresponding double cover. Then one has the inequality
$$
\Lambda_1(M,h)\leqslant \min\{8\pi d_1(\tilde M), 12\pi d_2(\tilde M)\}.
$$ 
\end{proposition}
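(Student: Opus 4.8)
The plan is to bound the product $\lambda_1^\tau(\SSS,g^*)\,Area_{g^*}(\SSS)$ appearing on the right-hand side of~(\ref{bound}), treating the two cases $\tau=\tau_1$ and $\tau=\tau_2$ separately; in each case the point is that this product equals $2\Lambda_1$ of an auxiliary surface whose conformal volume is supplied by Theorem~\ref{ThConf}, after which one feeds~(\ref{bound}) a map $f$ of the smallest admissible degree. Recall that by the Lemma above $\tau$ is an isometry of $(\SSS,g^*)$, which is what makes the splittings below legitimate.

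\emph{Case $\tau=\tau_1$.} Here the fixed point set of $\tau_1$ is a great circle $C$ dividing $\SSS$ into two closed hemispheres $D^{\pm}$ that $\tau_1$ interchanges. A $\tau_1$-even function on $\SSS$ is exactly the even reflection across $C$ of a function on $D^{+}$ obeying Neumann conditions on $C=\partial D^{+}$; since $g^*$ is $\tau_1$-invariant, this correspondence preserves the Rayleigh quotient and the zero-mean condition, so $\lambda_1^{\tau_1}(\SSS,g^*)$ equals the first non-zero Neumann eigenvalue $\lambda_1(D^{+},g^*)$. As $Area_{g^*}(D^{+})=\tfrac12 Area_{g^*}(\SSS)$, again by $\tau_1$-invariance, we get
$$
\lambda_1^{\tau_1}(\SSS,g^*)\,Area_{g^*}(\SSS)=2\Lambda_1(D^{+},g^*)\leqslant 4V_c(\SSS,g^*)=16\pi,
$$
using Theorem~\ref{ThConf}(3) for the domain $D^{+}\subset\SSS$ and Theorem~\ref{ThConf}(4); the conical singularities of $g^*$ are irrelevant here, since $V_c$ depends only on the conformal class and every metric on $\SSS$ is conformal to the round one. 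Plugging this into~(\ref{bound}) with $f$ of minimal degree $d=d_1(\tilde M)$ gives $\Lambda_1(M,h)\leqslant 8\pi d_1(\tilde M)$.

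\emph{Case $\tau=\tau_2$.} Now $\tau_2$ is the antipodal map of $\SSS$, which acts freely, so $g^*$ descends to a metric $\hat g^*$ (with conical singularities) on $\RP=\SSS/\tau_2$; a $\tau_2$-even function on $\SSS$ is the pull-back of a function on $\RP$, and this identification preserves Rayleigh quotients, whence $\lambda_1^{\tau_2}(\SSS,g^*)=\lambda_1(\RP,\hat g^*)$ and $Area_{\hat g^*}(\RP)=\tfrac12 Area_{g^*}(\SSS)$. Therefore
$$
\lambda_1^{\tau_2}(\SSS,g^*)\,Area_{g^*}(\SSS)=2\Lambda_1(\RP,\hat g^*)\leqslant 4V_c(\RP,\hat g^*)=24\pi
$$
by Theorem~\ref{ThConf}(2),(4). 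Feeding this into~(\ref{bound}) with $f$ of minimal degree $d=d_2(\tilde M)$ yields $\Lambda_1(M,h)\leqslant 12\pi d_2(\tilde M)$. Taking the minimum of the two bounds proves the Proposition; note that a map $(\tilde M,\sigma)\to(\SSS,\tau_1)$ of finite degree always exists (the real gonality is finite, see~\cite{CM}), so the first bound is always meaningful, whereas if $(\tilde M,\sigma)$ admits no map to $(\SSS,\tau_2)$ the second term is $+\infty$ and the statement is unaffected.

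The one step that genuinely requires verification — everything else being bookkeeping with the facts already established — is the reduction of the $\tau$-equivariant eigenvalue problem on the \emph{singular} surface $(\SSS,g^*)$ to the Neumann problem on $D^{+}$ (resp.\ the closed problem on $\RP$): one must check that the reflection/quotient correspondence between eigenfunctions remains valid across $\mathrm{Fix}(\tau_1)$ and through the conical points, and that $\Lambda_1$ and $V_c$ behave as in Theorem~\ref{ThConf} for these conical metrics. Granting this, the numerology is exactly as displayed above.
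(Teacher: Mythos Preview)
Your argument is correct and follows essentially the same route as the paper: in each case you identify $\lambda_1^{\tau}(\SSS,g^*)$ with the first eigenvalue of the quotient (the Neumann hemisphere for $\tau_1$, the projective plane for $\tau_2$), invoke the conformal-volume bounds from Theorem~\ref{ThConf}, and feed the result into inequality~(\ref{bound}). The paper's proof is terser and does not flag the conical-singularity issue you raise at the end; otherwise the two arguments coincide.
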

We prove this proposition in the following section.

Then Proposition~\ref{prop} together with the following lemma yields Theorem~\ref{MainTheorem}.
\begin{lemma}
If $\tilde M$ is a real curve of genus $\gamma$ then either $d_1(\tilde M)\leqslant 2\left[\frac{\gamma+3}{2}\right]$ or $d_2(\tilde M)\leqslant\left[\frac{\gamma+3}{2}\right]$
\label{lemma}
\end{lemma}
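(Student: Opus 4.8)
The plan is to combine the classical Brill--Noether / gonality bound for the complex curve $\tilde M$ with the fact that $\tilde M$ carries the real structure $\sigma$, and to analyze how the orientability of $M$ constrains which of the two real forms $\tau_1$, $\tau_2$ can occur. Recall that any Riemann surface of genus $\gamma$ admits a meromorphic function of degree at most $\left[\frac{\gamma+3}{2}\right]$, i.e.\ a holomorphic map $\tilde M\to\mathbb{CP}^1=\SSS$ of that degree; this is the bound used in the orientable case. I would start from such a map $f_0$ of minimal degree $d\leqslant\left[\frac{\gamma+3}{2}\right]$ and try to ``symmetrize'' it with respect to $\sigma$ so as to land in a real map.

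First I would consider $\bar f_0\circ\sigma$, the map $x\mapsto\overline{f_0(\sigma(x))}$, which is again holomorphic of degree $d$. If $f_0$ already satisfies $f_0\circ\sigma=\tau_1\circ f_0$ (equivalently $\bar f_0\circ\sigma=f_0$) then $d_1(\tilde M)\leqslant d\leqslant\left[\frac{\gamma+3}{2}\right]$, and we are in the first alternative — in fact with the stronger bound $\left[\frac{\gamma+3}{2}\right]$ rather than $2\left[\frac{\gamma+3}{2}\right]$. Otherwise I would build a real map of controlled degree out of $f_0$ and $\bar f_0\circ\sigma$. The natural construction is to take the product (or the map to $\mathbb{CP}^1$ given by the symmetric functions of the pair $\{f_0,\bar f_0\circ\sigma\}$): the function $g=f_0\cdot(\bar f_0\circ\sigma)$ is $\sigma$-invariant up to conjugation, hence descends to a $\tau_1$-real map, and its degree is at most $2d\leqslant 2\left[\frac{\gamma+3}{2}\right]$, giving the first alternative in general. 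Alternatively, if instead the appropriate combination is $\sigma$-anti-invariant (the relevant sign being dictated by whether $M$ is orientable — but $M$ is non-orientable, so $\sigma$ is fixed-point-free and reverses orientation on $\tilde M$ in the sense that it is anti-holomorphic), one lands in the $\tau_2$ real form instead, and here one should be able to keep the degree down to $d\leqslant\left[\frac{\gamma+3}{2}\right]$, giving the second alternative.

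Concretely, I would organize the dichotomy as follows. The divisor class $L$ of a degree-$d$ map $f_0$ can be chosen, by Riemann--Roch, with $d\leqslant\left[\frac{\gamma+3}{2}\right]$ and $\dim|L|\geqslant 1$. The anti-holomorphic involution $\sigma$ acts on the Picard group and on the linear system $|L+\sigma^*L|$ (or on $|L|$ itself if $\sigma^*L\cong L$), and this action is either ``real'' or ``quaternionic''; since $\tilde M$ has no real points, exactly one possibility can be excluded in a way depending on the parity of $\gamma$ — this is precisely the point where the Hurwitz/Comessatti-type constraint enters, and where the original Li--Yau argument went wrong. The cleanest route is: if $\sigma^*L\cong L$, push $\sigma$ through the map $f_0$ to get a conjugate-linear involution of $H^0(L)$; a real one yields a $\tau_1$-real map of degree $d$, a quaternionic one cannot occur for dimension reasons when $\dim H^0(L)$ is odd but forces us instead to double up to degree $2d$. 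When $\sigma^*L\not\cong L$, the system $|L+\sigma^*L|$ is $\sigma$-invariant, has degree $2d$, dimension at least $1$, and carries a genuinely real structure, so $d_1(\tilde M)\leqslant 2d\leqslant 2\left[\frac{\gamma+3}{2}\right]$ unconditionally, and in the remaining case one squeezes out a $\tau_2$-map of degree $d$.

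The main obstacle I expect is exactly the case analysis in the previous paragraph: showing that whenever one cannot produce a $\tau_1$-real map of degree $\le 2\left[\frac{\gamma+3}{2}\right]$, one can produce a $\tau_2$-real map of degree $\le\left[\frac{\gamma+3}{2}\right]$. This requires understanding the interaction between the anti-holomorphic involution $\sigma$ and minimal-degree pencils — in effect, a real Brill--Noether statement — and correctly tracking the parity of $\gamma$, since the two real forms $\tau_1$ (which, being conjugate to $z\mapsto\bar z$, has real points on $\SSS$) and $\tau_2$ (conjugate to $z\mapsto-\bar z^{-1}$, fixed-point-free) behave differently with respect to lifting through branched covers. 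I would lean on the cited references on real gonality (e.g.\ \cite{CM}) to handle the structural input about real pencils, and reduce the parity bookkeeping to a short topological computation with the double cover $\tilde M\to M$.
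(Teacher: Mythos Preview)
Your proposal contains the key construction but misidentifies the case analysis, and as written it does not close.

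You correctly single out the product $g=f_0\cdot(\bar f_0\circ\sigma)$; this is exactly the function the paper uses. The point you miss is that $g$ can fail to be a non-constant meromorphic function: it may be identically equal to a nonzero real constant $C$ (it is automatically real-valued in the sense $g\circ\sigma=\bar g$). When $g$ is constant it gives no bound on $d_1$ at all, so your ``first alternative in general'' is not justified. The correct trichotomy is simply:
\begin{itemize}
\item $g$ non-constant: then $g$ is a $\tau_1$-real map of degree $\leqslant 2d$, so $d_1\leqslant 2\left[\frac{\gamma+3}{2}\right]$;
\item $g\equiv C>0$: rescaling to $C=1$ gives $f_0(\sigma(z))=1/\overline{f_0(z)}$, so $f_0$ \emph{itself} is real for the involution $z\mapsto 1/\bar z$, which is conjugate to $\tau_1$; hence $d_1\leqslant d$;
\item $g\equiv C<0$: rescaling to $C=-1$ gives $f_0(\sigma(z))=-1/\overline{f_0(z)}$, so $f_0$ itself is $\tau_2$-real; hence $d_2\leqslant d$.
\end{itemize}
That is the whole proof. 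The $\tau_2$ alternative arises precisely from the sign of the constant, not from any ``anti-invariant combination'' or quaternionic structure on $H^0(L)$.

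Two further corrections. First, no parity-of-$\gamma$ argument and no Hurwitz/Comessatti constraint enter this lemma at all; you are importing machinery that the paper only uses later, in the hyperelliptic case, to rule out $\tau_2$ for even $\gamma$. Second, you do not need the real Brill--Noether theory from~\cite{CM}: the argument above is completely elementary once you write down $g$ and ask whether it is constant. Your Picard-group paragraph is a plausible alternative framework, but it is both harder and, as you yourself note, incomplete without the structural input you defer to the literature.
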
 
\begin{proof}
By a well-known theorem, for any Riemann surface $\Sigma$ of genus $\gamma$ there exists a holomorphic map $f\colon\Sigma\to\bar{\mathbb{C}}$ of degree $d = 
\left[\frac{\gamma+3}{2}\right]$, where $\bar{\mathbb{C}}$ is a Riemann sphere. In other words there exists meromorphic function $f$ on $\Sigma$ such that generic point $z\in\mathbb{C}$ has exactly $d$ preimages. 

Consider a new function $F(z) = f(z)\bar{f}(\sigma(z))$. Note that $F(\sigma(z)) = \bar{F}(z)$. Therefore if $F\equiv C$ then $C\in \mathbb{R}$. Moreover $C\ne 0$ since $f$ is not identically zero. 

{\bf Case 1.}  $F(z) \equiv C>0$. Then by rescaling we can assume that $C=1$. Therefore the function $f$ satisfies relation~(\ref{tau}) for $\tau(z) = \frac{1}{\bar z}$. This transformation is conjugate to $\tau_1(z)$ by $z\mapsto\frac{z+i}{z-i}$. Thus in this case we have $d_1(\tilde M) = \left[\frac{\gamma+3}{2}\right]$. 

{\bf Case 2.} $F(z) \equiv C<0$. Again by rescaling we may assume $C=-1$. Therefore the function $f$ satisfies relation~(\ref{tau}) for $\tau(z) = -\frac{1}{\bar z}$, which coincides with $\tau_2(z)$. Thus in this case we have $d_2(\tilde M) = \left[\frac{\gamma+3}{2}\right]$.
 
{\bf Case 3.} $F(z)$ is a non-constant function. Then $F(z)$ is a meromorphic function of degree $2\left[\frac{\gamma+3}{2}\right]$ that satisfies relation~(\ref{tau}) for $\tau(z) = \bar z$, which is $\tau_1(z)$. Thus in this case we get $d_1(\tilde M) = 2\left[\frac{\gamma+3}{2}\right]$.
\end{proof}
\begin{remark} M. Coppens communicated to us that if $\gamma = 2^n-2$, then $\min\{d_1,d_2\}\leqslant \gamma/2 + 1$. According to him that can be proved in the same way as Theorem~$2$ in paper~\cite{CM}.
 Given that statement the inequality in Theorem~\ref{MainTheorem} becomes
$$
\Lambda_1(M,h)\leqslant 12\pi\left[\frac{\gamma+3}{2}\right],
$$
in case $\gamma = 2^n-2$ for some $n$.
\end{remark}

\section{Proof of Proposition~\ref{prop}}
Let $f\colon(\tilde M,\sigma) \to (\SSS, \tau_1)$ be a map of real curves of degree $d_1$, i.e. a meromorphic function of degree $d_1$ on $\tilde M$ that satisfies relation~(\ref{tau}) for $\tau(z) = \tau_1(z) = \bar z$.
\label{final}
 The fixed point set of $\tau_1$ is an equator in $\SSS$, therefore $\lambda_1^{\tau}(\SSS,g^*)$ can be identified with the first eigenvalue $\lambda_1(D,g^*)$ of a hemisphere $D$ with Neumann boundary conditions. Items 3 and 4 of Theorem~\ref{ThConf} imply that $\Lambda_1(D,g^*)\leqslant 8\pi$. Since the area of the hemisphere is half the volume of the sphere one has the inequality
$$
\lambda_1^{\tau}(\SSS,g^*) Area_{g^*}(\SSS)\leqslant 16\pi.
$$
Combining that with inequality~(\ref{bound}) one obtains 
$$
\Lambda_1(M,h)\leqslant 8\pi d_1.
$$ 

Let $f\colon(\tilde M,\sigma) \to (\SSS, \tau_2)$ be a map of real curves of degree $d_2$, i.e. a meromorphic function of degree $d_2$ on $\tilde M$ that satisfies relation~(\ref{tau}) for $\tau(z) = \tau_2(z) = -\bar z^{-1}$.
 In this case $\tau$ is antipodal involution and $\lambda_1^{\tau}(\SSS,g^*)$ can be identified with the first eigenvalue of $\RP = \SSS/\tau$ with the induced metric. Theorem~\ref{ThConf} yields the inequality 
$$
\Lambda_1(\RP,g)\leqslant 12\pi
$$
for any metric $g$. Proceeding as in the previous case we obtain the inequality
$$
\Lambda_1(M,h)\leqslant 12\pi d_2.
$$

\section{Hyperelliptic case}

In case $(\tilde M,g)$ is a hyperelliptic surface (e.g. $\tilde M$ is of genus $2$) it is possible to improve the statement of Lemma~\ref{lemma}.
\label{hyperelliptic}

\begin{lemma}
Suppose that $(\tilde M,g)$ is hyperelliptic of even genus $\gamma$. Then $d_1(\tilde M) = 2$.
\end{lemma}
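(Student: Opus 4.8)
The plan is to reduce everything to the antiholomorphic involution induced on the base of the hyperelliptic covering. Since $\gamma\geqslant2$, the hyperelliptic involution $\iota\colon\tilde M\to\tilde M$ is the unique involution whose quotient has genus zero, hence it is preserved by every automorphism of $\tilde M$, holomorphic or antiholomorphic; in particular $\sigma\iota=\iota\sigma$. Consequently $\sigma$ descends along the degree-two quotient map $h\colon\tilde M\to\tilde M/\langle\iota\rangle=\bar{\mathbb{C}}$ to an antiholomorphic involution $\bar\sigma$ of $\bar{\mathbb{C}}$ with $h\circ\sigma=\bar\sigma\circ h$, and $\bar\sigma$ is conjugate in $\mathrm{PGL}_2(\mathbb{C})$ to $\tau_1(z)=\bar z$ or to $\tau_2(z)=-\bar z^{-1}$. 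If $\bar\sigma=\psi\tau_1\psi^{-1}$, then $f:=\psi^{-1}\circ h$ is a meromorphic function of degree $2$ on $\tilde M$ with $f\circ\sigma=\tau_1\circ f$, i.e. a map of real curves $(\tilde M,\sigma)\to(\SSS,\tau_1)$ of degree $2$, so $d_1(\tilde M)\leqslant2$; since $\tilde M$ is not rational there is no such map of degree $1$, whence $d_1(\tilde M)=2$. Thus the entire content of the lemma is to rule out the case $\bar\sigma\sim\tau_2$ when $\gamma$ is even.

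To do this I would argue by contradiction and compute the lifts of $\tau_2$ explicitly. After replacing $h$ by $\psi^{-1}\circ h$ we may assume $\bar\sigma=\tau_2$, and after composing further with a generic rotation of the sphere (these commute with $\tau_2$) we may assume that the branch locus $B=h(\mathrm{Fix}\,\iota)\subset\bar{\mathbb{C}}$ avoids $0$ and $\infty$. By Riemann--Hurwitz $|B|=2\gamma+2$, and $B$ is $\tau_2$-invariant; as $\tau_2$ is fixed-point-free, $B$ consists of $\gamma+1$ antipodal pairs $\{b,-\bar b^{-1}\}$. Writing $\tilde M$ as $y^2=p(x)$ with $p(x)=\prod_{b\in B}(x-b)$ of degree $2\gamma+2$, so that $h(x,y)=x$ and $\iota(x,y)=(x,-y)$, the $\tau_2$-invariance of $B$ gives the identity $p(-\bar x^{-1})=P\,\bar x^{-(2\gamma+2)}\,\overline{p(x)}$ with $P:=\prod_{b\in B}b$; moreover $P$ is a product of $\gamma+1$ numbers $b\cdot(-\bar b^{-1})=-b/\bar b$, each of modulus $1$, so $|P|=1$. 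Hence every lift of $\tau_2$ to $\tilde M$ has the form $\sigma(x,y)=\bigl(-\bar x^{-1},\,c\,\bar y\,\bar x^{-(\gamma+1)}\bigr)$ with $c^2=P$ (so $|c|=1$), the two signs of $c$ giving the two lifts $\sigma$ and $\iota\sigma$. A direct substitution then yields $\sigma^2(x,y)=\bigl(x,\,(-1)^{\gamma+1}y\bigr)$. For $\gamma$ even this is exactly $\iota(x,y)$, so $\sigma^2=\iota\neq\mathrm{id}$: no lift of $\tau_2$ is an involution, which contradicts the hypothesis that $\sigma$ is one. Therefore $\bar\sigma\sim\tau_1$, and the first paragraph gives $d_1(\tilde M)=2$.

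I expect the main obstacle to be precisely this explicit computation of the lifts of $\tau_2$ and of $\sigma^2$: one must check that $|P|=1$ \emph{exactly}, so that the failure of $\sigma^2=\mathrm{id}$ is genuine and cannot be repaired by rescaling $y$, and one must make sure that the two normalizations $\bar\sigma=\tau_2$ and $0,\infty\notin B$ can be arranged simultaneously. The parity of $\gamma$ enters only through the exponent $\gamma+1$ on $\bar x$, which is forced by $\deg p=2\gamma+2$.
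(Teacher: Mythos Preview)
Your argument is correct, and the overall architecture matches the paper's: both proofs first observe that the hyperelliptic involution commutes with $\sigma$ (you via uniqueness of the involution with rational quotient, the paper via the centrality statement in Farkas--Kra), deduce that the hyperelliptic projection intertwines $\sigma$ with either $\tau_1$ or $\tau_2$, and then use the parity of $\gamma$ to exclude $\tau_2$.

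The genuine difference is in how the $\tau_2$ case is eliminated. The paper passes to the quotient: if $\bar\sigma=\tau_2$, the degree-$2$ map $\tilde M\to\mathbb{S}^2$ descends to a branched double cover $M\to\mathbb{RP}^2$, and the paper then invokes the Hurwitz realizability identities (citing Edmonds--Kulkarni--Stong) to see that no such cover exists when $\gamma$ is even. Your route instead stays on $\tilde M$: working in the affine model $y^2=p(x)$ you compute the two antiholomorphic lifts of $\tau_2$ explicitly and show that each squares to the hyperelliptic involution $\iota$ when $\gamma$ is even, so neither can equal the free involution $\sigma$. Your computation is sound --- the key points that $|P|=1$ exactly (so $c\bar c=1$ and the sign $(-1)^{\gamma+1}$ cannot be absorbed) and that the normalizations $\bar\sigma=\tau_2$ and $0,\infty\notin B$ are simultaneously achievable via a rotation commuting with $\tau_2$ are both correct. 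The trade-off: the paper's argument is a two-line reduction plus a citation, whereas yours is fully self-contained and makes the parity obstruction completely transparent, at the cost of a page of explicit calculation on the hyperelliptic model.
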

\begin{proof}
Let $J$ denote the hyperelliptic involution. By Riemann-Hurwitz theorem $J$ has $2\gamma +2$ fixed points and therefore $J\ne\sigma$. Then by Corollary $3$ on p. 108 of book~\cite{FK}, $J$ commutes with every conformal diffeomorphism of $\tilde M$, in particular $J\circ \sigma = \sigma\circ J$. Therefore hyperelliptic prjection $\Pi$ satisfies  relation~(\ref{tau}) with some $\tau$. Without loss of generality we may assume $\tau = \tau_1$ or $\tau = \tau_2$. In case $\tau = \tau_2$ we obtain a branched 2-sheeted covering $M\to\RP$. Since $\gamma$ is even, the latter contradicts Hurwitz identities (see~\cite{EKS}). Thus, $\tau = \tau_1$ and $d_1(\tilde M) = 2$.
\end{proof}

Since any surface of genus two is hyperelliptic we obtain Theorem~\ref{gamma2} as a corollary.

\subsection*{Acknowledgements.} The author is grateful to I.~Polterovich for fruitful discussions and helpful remarks on the preliminary version of the manuscript, and to M.~Coppens for consultations on real algebraic geometry.

The research of the author was partially supported by Tomlinson Fellowship and Lorne Trottier Fellowship.

\end{document}